\def\x{\mathsf{x}}
\def\y{\mathsf{y}}
\def\IN{\mathbb N}
\def\INo{{\mathbb N}_0}
\def\1I{\mathbbm{1}}
\def\IZ{\mathbb Z}
\def\IR{\mathbb R}
\def\A{\mathcal A}
\def\B{\mathcal B}
\def\E{\mathcal E}
\def\M{\mathcal M}
\def\L{\mathcal L}
\def\Z{\mathcal Z}
\def\a{\mathsf{a}}
\def\b{\mathsf{b}}
\def\top{{\rm top}}
\def\Max{{\rm Max}}
\def\dist{{\rm dist}}
\def\diam{{\rm diam}}
\def\asinh{\operatorname{asinh}}
\def\acosh{\operatorname{acosh}}
\def\ms{\medskip\noindent}
\theoremstyle{plain}      \newtheorem{lemma}{Lemma}
\theoremstyle{plain}      \newtheorem{corollary}{Corollary}
\theoremstyle{plain}      \newtheorem{proposition}{Proposition}
\theoremstyle{plain}      \newtheorem{theorem}{Theorem}
\theoremstyle{plain}      \newtheorem{hypothesis}{Hypothesis}
\def\cross{\rm cross}
\def\0{\rm o}
\title[On the SFT-SMM correspondence]{On the correspondence between Subshifts of Finite Type and Statistical Mechanics Models}
\author{L. A. Corona}
\address{Instituto de Investigaci\'on en Ciencias B\'asicas y Aplicadas, Universidad Aut\'onoma del Estado de Morelos, M\'exico}
\email{luis.corona@uaem.edu.mx}
\author{R. Salgado-Garc\'\i a}
\address{Centro de Investigaci\'on en Ciencias, Universidad Aut\'onoma del Estado de Morelos, M\'exico}
\email{raulsg@uaem.mx}
\author{E. Ugalde}
\address{Instituto de F\'isica, Universidad Aut\'onoma de San Luis Potos\'i, M\'exico.}
\email{ugalde@ifisica.uaslp.mx}
\date{}                                         
\begin{document}
\baselineskip=15pt

\begin{abstract}
R. Burton and J. Steif developed a strategy to construct examples of strongly irreducible subshifts of finite type admitting several measures of maximal entropy. This strategy exploits a correspondence between equilibrium statistical mechanics and symbolic dynamics, correspondence which was later formalized by O. H\"aggstr\"om. In this paper, we revisit and discuss this correspondence with the aim of presenting a simplified version of it, and to expose some applications of rigorous results concerning the Potts model and the six-vertex model to symbolic dynamics, illustrating in this way of the possibilities of this correspondence.
\end{abstract}

\maketitle
\section{Introduction}
\noindent A subshift of finite type (SFT) is a symbolic dynamical system determined by a finite collection of forbidden patterns. For transitive one-dimensional subshifts of finite type on a finite number of states, there exists one and only one invariant measure achieving the topological entropy, which, on the other hand, is the supremum of the metric entropies (see~\cite{Kitchens1998} for instance). A topological dynamical system with a unique measure of maximal entropy is qualified as intrinsically ergodic. In higher dimensions, transitivity is not enough to ensure that a subshift of finite type is intrinsically ergodic. In~\cite{Burton&Steif1994,Burton&Steif1995} R. Burton and J. Steif developed a strategy to construct examples of transitive subshifts of finite type admitting several measures of maximal entropy. This strategy, further developed by O. H\"aggstr\"om in~\cite{Haggstrom1995a, Haggstrom1995b}, consists on making correspond subshifts of finite type to statistical mechanics models, in such a way that equilibrium states for the statistical mechanics model correspond to measures of maximal entropy for the symbolic system. The success of this approach lies in the fact that it furnishes a dictionary between equilibrium statistical mechanics and symbolic dynamics, translating rigorous results from statistical mechanics to symbolic dynamics. Indeed, one of the results obtained in~\cite{Burton&Steif1994}, the existence of a strongly irreducible subshift of finite type in dimension two supporting at least two ergodic measures of maximal entropy, is the translation of a result by Peierls concerning the Ising model. Using the same strategy, Burton and Steif derive in~\cite{Burton&Steif1995}, using an idea analogous to the one used by M. Zahradnik in~\cite{Zahradnik1984}, a complete description of the simplex of measures of maximal entropy. In~\cite{Haggstrom1995b}, H\"aggstr\"om formalizes and generalizes the above-mentioned correspondence, in such a way that, for each equilibrium state of the statistical mechanics model (SMM), there is a measure of maximal entropy for the corresponding subshift of finite type. In this paper, we revisit this correspondence with the aim of pointing out further applications of statistical mechanics results to symbolic dynamics. The correspondence we study in this paper is a simplified version of the one due to O. H\"aggstr\"om appearing in~\cite{Haggstrom1995b}. Ours is simpler in what concerns the construction the subshift of finite type as well in the proof of the equivalence. Our construction also makes explicit the correspondence between the parametrization of the family of SFTs and the inverse temperature in the corresponding SMM, making sense of a phase transition in the symbolic context. Although the construction can be carried out in any dimension, for the sake of concreteness we will restrict ourselves to dimension two, where the relevant phenomenology already appears. In order to illustrate the aforementioned correspondence and the nature of the applications we invoke, we will use the Potts model and the six-vertex model of statistical mechanics. The rest of the paper is organized as follows: in Section~\ref{sec:definitions-notation} we introduce some basic notions that we will use throughout the paper. In Section~\ref{sec:BF-correspondence} we will develop the SFT-SMM correspondence. In Section~\ref{sec:Potts} and~\ref{sec:SixVertex} we illustrate this correspondence in the case of the Potts and the six-vertex model respectively. We close the paper with some concluding remarks.

\section{Definitions and notations}
\label{sec:definitions-notation}
\subsection{}
We place ourselves in the common setting of 2D symbolic dynamics and 2D lattice statistical mechanics. Configurations are $\IZ^2$-arrays with entries in a finite set $\A$ (the set of occupation numbers, spins, energy levels, etc.), which we call the alphabet. To denote the coordinate projections of a configuration $x$, we will use subindices. Hence, for $z\in\IZ^2$, with $x_z$ we denote the projection of $x$ on the coordinate $z$. Similarly, if $\Lambda\subset \IZ^2$ then $x_\Lambda\in\A^{\Lambda}$ denotes the patch in $\Lambda$ obtained from $x$ by coordinate projections. To signify that $\Lambda\subset \IZ^2$ is finite, we will use the notation $\Lambda\Subset\IZ^2$. Now, for each $\Lambda\Subset\IZ^2$ every $\Lambda$-shaped patch $a\in \A^{\Lambda}$ defines a cylinder set
\[
[a]:=\left\{x:\, x_\Lambda=a\right\}.
\]
We supply the space of configurations with the distance $d(x,y)=e^{-\min\{n\geq 0:\, |z|\leq n \Rightarrow x_z=y_z\}}$. Each $z\in\IZ^ 2$ defines a shift transformation $\sigma_z$ such that $(\sigma_z x)_s=x_{z+s}$ for each configuration $x$ and every $s\in\IZ^2$. The group $\sigma:=\{\sigma_z:\,z\in\IZ^2\}$ of shfit transformations act continuously on the space of configurations. A set $X$ of configurations is a subshift if it is closed with respect to the distance $d$, and $\sigma$-invariant\footnote{$\sigma$-invariant means that $\sigma_z X=X$ for all $z\in\IZ^2$}. The term subshift is used to refer to the metric space as well as to the dynamical system defined on it by the action of $\sigma$. A subshift is of finite type (SFT) if for some $\Lambda\subset\IZ^2$ and a finite collection of $\Lambda$-shaped patches $\L\subset\A^{\Lambda}$ we have
\[
X:=\left\{x:\, (\sigma_z x)_{\Lambda}\in\L\, \,
\text{for all}\, z\in\IZ^2\right\}.
\]
Subshifts are equivalently defined by a collection of forbidden patches. The patch $a\in\A^{\Lambda}$ is $X$-admissible (admissible for short), if there exists at least one configuration in $X$ containing that patch, i.e., $[a]\cap X\neq\emptyset$. We denote with $\L_\Lambda(X)$ the collection of all the admissible $\Lambda$-shaped patches. Whenever there is no ambiguity, we will use  $[a]$ as a shorthand for $[a]\cap X$. We will as well refer to $X$ as the subshift, understanding that it is subject to the action of $\sigma$.

\ms Let $\Lambda,\Lambda'\subset\IZ^2$ be disjoint. For each $a\in\A^\Lambda$ and $b\in\A^{\Lambda'}$, with $a\oplus b$ we denote the $\Lambda\cup\Lambda'$-shaped patch 
$c\in\A^{\Lambda\cup\Lambda'}$ such that $c_\Lambda=a$ and $c_{\Lambda'}=b$. 
The subshift $(X,\sigma)$ is strongly irreducible if there exists $\ell > 0$ such that for each couple of disjoint shapes $\Lambda',\Lambda\Subset \IZ^d$ with 
${\rm dist}(\Lambda,\Lambda')\geq \ell$, and every couple of admissible patches $a\in \A^\Lambda,\, b\in\A^{\Lambda'}$, there exists an admissible configuration containing both patches, i.e., $[a\oplus b]\neq \emptyset$. 

\ms Let us assume that $(X,\sigma)$ is strongly irreducible. For each $n\in\IN$ let $\Lambda_n=[-n,n]\times[-n,n]\cap\IZ^2$. The topological entropy of the subshift $X\subset\A^{\IZ^2}$ is the limit
\[
h_\top(X):=\lim_{n\to\infty}\frac{\log\left|\L_{\Lambda_n}(X)\right|
                                }{|\Lambda_n|}.
\]
The existence of this limit follows from the Fekete's subadditivity lemma. The topological entropy of $X$ gives the rate of exponential growth of $|\L_\Lambda(X)|$ with respect to $|\Lambda|$. 

\ms The collection of all the Borel probability measures on $X$ is denoted by $\M^1(X)$. It is a convex set and it is made a topological space by considering the weak topology. The subcollection of $\sigma$-invariant Borel probability measures, which we denote $\M_\sigma^1(X)$, is a simplex in this topology. A measure $\mu\in\M_\sigma^1(X)$ is ergodic if any $\sigma$-invariant set has $\mu$ measure equal to zero or one. The measure theoretic entropy for $\mu\in\M_\sigma^1(X)$ is the quantity
\[
h(\mu):=-\lim_{n\to\infty}
\frac{1}{|\Lambda_n|}\sum_{a\in\A^{\Lambda_n}}
\mu[a]\log\mu[a].
\]
Once again, the existence of the limit is consequence of the subadditivity lemma.

\subsection{} We will consider two kinds of simplices of measures on $\M^1_\sigma(X)$: the equilibrium states for an interaction at a given inverse temperature, and the simplex of measures of maximal entropy. For this this we need to remind some notions related to statistical mechanics. We start by considering an interaction, which is a collection of functions 
$\Phi:=\left\{\Phi_\Lambda:\A^{\Lambda}\to\IR:\ 
                               \Lambda\Subset\IZ^2\right\}$. 
In the following, we will assume that the interaction is of finite range and 
$\sigma$-invariant. This means that  
\begin{itemize}
\item[(a)] there exists $r>0$ such that if $\diam(\Lambda)>r$ then 
$\Phi_\Lambda(a)=0$ for all $a\in\A^{\Lambda}$, and
\item[(b)] $\Phi_\Lambda(a)=\Phi_{\Lambda+z}(\sigma_z a)$, for each 
$z\in\IZ^2$. 
\end{itemize}
Abusing notation we use $\sigma_z a$ to denote the unique patch in $b\in\A^{\Lambda+z}$ such that $b_s=a_{s-z}$, for all $s \in \mathbb{Z}^2$. The number $r$  in (a) stands for the range of the interaction. For each $\Lambda\Subset\IZ^2$ let $\partial^r\Lambda$ be the $r$-border of $\Lambda$, i.e., $\partial^r\Lambda:=\{z\in\IZ^2\setminus\Lambda: \dist(z,\Lambda)\leq r\}$. In the particular case of $r=1$, will use the notation $\partial\Lambda$ instead of $\partial^1\Lambda$.

\ms Given the interaction $\Phi$, an equilibrium state at inverse temperature $\beta \geq 0$ is a probability measure $\mu\in\M_\sigma^1(X)$ satisfying the following: for each $\Lambda\Subset\IZ^2$, $\Lambda'\supset \Lambda\cup\partial^r\Lambda$,  and every admissible patch $a\in\A^{\Lambda'}$, we have
\[
\mu\left(
\left[a_\Lambda\right] | a \right):=\frac{e^{-\beta H_\Lambda
      \left(a_{\Lambda \cup \partial^r\Lambda}\right)}
                  }{
\displaystyle 
\sum_{b\in \A^{\Lambda}:\, [b\oplus a_{\partial^r\Lambda}]
\neq\emptyset} 
e^{-\beta\,H_\Lambda\left(b\oplus a_{\partial^r\Lambda}\right)}},
\]
where $H_\Lambda(y) := \sum_{U\subset\Lambda\cup\partial^r\Lambda}\Phi_{U}(y_U)$ is the energy of the configuration $y$ restricted to the volume $\Lambda$. The collection of all the  equilibrium states, $\E_\beta(\Phi)\subset\M_\sigma^1(X)$, is a Choquet simplex whose extrema are ergodic measures (see~\cite{GeorgiiBook2011} for instance). The denominator
\[
\Z_\beta\left(\Lambda,a_{\partial^r\Lambda}\right):=
\sum_{b\in \A^{\Lambda}:\, [b\oplus a_{\partial^r\Lambda}]
\neq\emptyset} 
e^{-\beta\,H_\Lambda\left(b\oplus a_{\partial^r\Lambda}\right)},
\]
defines the partition function, which depends on the inverse temperature $\beta$ as well as the boundary patch $a_{\partial\Lambda}\in \A^{\partial^r\Lambda}$. If the $(X,\sigma)$ is strongly irreducible, the limit
\[
f_\phi(\beta):=-\frac{1}{\beta}\,\lim_{n\to\infty}
\frac{1}{|\Lambda_n|}\log\,\Z_\beta(\Lambda_n,x),
\]
exists and defines the Helmholtz free energy. 

\ms From the finite range $\sigma$-invariant interaction $\Phi$ we define the specific energy $u(x):=\sum_{U \ni \0}\Phi_U(x_U)/|U|$, where $\0$ denotes the  origin of $\mathbb{Z}^2$, i.e., $\0 = (0,0)$. If $(X,\sigma)$ is a strongly irreducible SFT, for each $\mu\in\M_\sigma^1(X)$ we have
\[
f_\phi(\beta)\leq \mu(u)-\frac{h_\sigma(\mu)}{\beta}.
\]
The equality is attained if and only if $\mu\in\E_\beta(\Phi)$. Since $\lim_{\beta\to 0}\beta\,f(\beta)=-h_\top(X)$, then $h_\sigma(\mu)\leq h_{\rm top}(X)$ for each $\mu\in\M_\sigma^1(X)$. The collection $\Max(X):=\{\mu\in\M_\sigma^1(X):\, h_\sigma(\mu)=h_\top(X)\}$ is the simplex of measures of maximal entropy and it coincides with $\E_0(\Phi)$ for an arbitrary interaction $\Phi$, as long as the local energy $x\mapsto u(x):=\sum_{\0\in \Lambda}\Phi_\Lambda(x)$ is a continuous function. As mentioned before, when $\Max(X)$ is a singleton we say that $(X,\sigma)$ is 
intrinsically ergodic.

\subsection{}\label{par:OneLetter} 
We can transform any finite range $\sigma$-invariant interactions over an arbitrary SFT, by means of a block coding, to a one-letter function over an SFT defined by a collection of patches on the cross-like volume 
\[
{\Lambda_{\cross}}:=\{(0,0),(\pm 1,0),(0,\pm 1)\}\equiv
\{\0,\pm {\rm e}^1, \pm{\rm e}^2\}\subset \IZ^2.
\]
For this, let $Y$ be a two-dimensional SFT on the alphabet $\B$, defined by the collection of $F$-shaped admissible patches. Let $\Psi$ be an interaction of finite range. Consider the volume
\[\Lambda=F\bigcup\left(\bigcup_{U\ni \0:\, \Psi_U\neq 0} U\right),\]
which comprises the range of the interaction as well as the volume needed to define the SFT. We can naturally embed $Y$ into a two-dimensional subshift $\iota(Y):=X$ on the alphabet $\A:=\B^{\Lambda}$. The embedding $y\mapsto \iota(y)$ is given by 
$\iota(y)_z=y_{z+\Lambda}$ for all $z\in \IZ^2$, and it is a topological conjugacy between $(Y,\sigma)$ and $(X,\sigma)$. Clearly $X$ is the SFT defined by the collection 
\[\L:=\left\{a\in\A^{{\Lambda_{\cross}}}: a_{z}\in \L_\Lambda(Y) \text{ and } (a_{\0})_{\zeta + z}=(a_{z})_\zeta \,\forall z\in\Lambda_{\cross}\,\forall \zeta\in\Lambda \text{ s.t. }\{\zeta+z,z\}\subset\Lambda\right\}.
\]
This is nothing but the requirement of the letters of an admissible patch in $X$ to be themselves admissible patches in $Y$, and that they correctly overlap when considered as patches in $Y$. The embedding $\iota$ reduces $\Psi$ to the one-letter function $a\mapsto \Phi_{\{\0\}}(a)=\sum_{U\ni \0}\Psi_U(a_U)/|U|$ which defines the energy functions $a\mapsto H_{\Lambda}(a)=\sum_{z\in\Lambda}\Phi_{\{\0\}}(b_z)$. The embedding $\iota$ induces the map $\iota^*:\M_\sigma^1(Y)\to\M_\sigma^1(X)$
such that $\iota^*\mu(B)=\mu(\iota^{-1}B)$ for each Borel set $B\subset X$. This map is an isomorphism between the simplices $\M_\sigma^1(Y)$ and $\M_\sigma^1(X)$ mapping $\E_\beta(\Psi)$ into $\E_\beta(\Phi)$ for each $\beta\geq 0$.

\section{The SFT-SMM correspondence}
\label{sec:BF-correspondence}

\ms As mentioned in the introduction, the H\"aggstr\"om correspondence takes advantage of the fact that for some models of equilibrium statistical mechanics, the energy is concentrated on some lattice regions which we refer to as contours. In the complement of these regions, the configurations have to be homogeneous and of minimal energy. The passage from one thermodynamic regime where only one equilibrium state exists to a situation where coexist several ergodic equilibrium states are governed by the competition between energy and entropy. The corresponding Burton-Steif family of subshifts is such that homogeneous regions become highly entropic regions, while contours remain zero entropy or low entropy regions. The transition is then governed by the increase in entropy of the homogeneous regions. Let us now present a simplified version of H\"aggstrom construction, which makes explicit the correspondence between a parametrized family of SFTs and an SMM subject to the variation of the inverse temperature. The construction can be carried out in any dimension, but for concreteness, we restrict it to dimension two, where phase transitions may occur. 

\subsection{}
The framework is that of finite-range interactions on strongly irreducible SFTs in dimension two. Taking into account the observations in paragraph~\ref{par:OneLetter}, we can assume that the set of admissible patches have support on cross-shaped region, ${\Lambda_{\cross}}:=\{\0,\pm{\rm e}^1,\pm{\rm e}^2\}\subset \IZ^2$ and that $\Phi_\Lambda=0$ if $|\Lambda|\neq 1$. Since $\Phi$ is $\sigma$-invariant, then $\Phi_{\{z\}}(a)=\Phi_{\{\0\}}(\sigma_{-z}a)$ for all $z\in\IZ^2$. In order to establish the equivalence between equilibrium states and measures of maximal entropy, will make the following assumption concerning the interaction: 

\begin{hypothesis}\label{hyp:H}
There exists $\epsilon_0>0$ and $S\Subset \INo$ such that $\Phi_{\{\0\}}\in \epsilon_0\,S$ for each 
$\Lambda\Subset\IZ^2$.
\end{hypothesis}

\ms Therefore, we assume that all the values of the interaction are multiples of the same magnitude $\epsilon_0$. This is in fact equivalent to the restriction of an interaction taking only rational values.  

\ms Let us partition the alphabet $\A$ into equi-energetic letters, i.e.,
for each $s\in S$ let 
\[
\A_s:=\{a\in\A:\, \Phi_{\{\0\}}(a) = \epsilon_0\, s\}.
\] 
Let us now split the alphabet in order to define a parametrized family of SFTs, each value of the parameter corresponding to a different inverse temperature for the corresponding SMM. For each $N\in\INo$ let $\A_N:=\bigcup_{s\in S}(\A_s\times\{0,1,\ldots,N_s-1\})$ with $N_s:=N^{\max S -s}$. Hence, to the maximum local energy, it corresponds the least possible degeneracy while degeneracy is a monotonous function of the energy. We can think of the first coordinate in $\A_s\times\{0,1,\ldots,N_s-1\}$ as the color type of the symbol, while the second coordinate will be the tone of the corresponding color. The split alphabet $\A_N$ has as many color types as $\A$ but the $s$-th color type is split into $N_s$ possible tones. We use $\pi_c$ and $\pi_t$ for the projection onto the color and the tone respectively. We use the same notation, $\pi_c$ and $\pi_t$, for the extension of these projections to finite patches and infinite configurations. For the letters in the split alphabet, we use boldface font to distinguish them from the letters in the original alphabet.

\ms Let us now define the two-dimensional SFT $X_N$ on the alphabet $\A_N$, whose collection of admissible patches is 
\[
\L_N:=\{\a\in\A_N^{{\Lambda_{\cross}}}:\, \pi_c(\a)\in\L\}.
\]
It is easily verified that $(X_N,\sigma)$ inherits the strong irreducibility from $(X,\sigma)$. The projection $\pi_c: X_N\to X$ is a factor map (continuous, commuting with $\sigma$) and the induced transformation $\M_\sigma^1(X_N)\ni\mu\mapsto\pi^*\mu=\mu\circ\pi_c^{-1}$ is continuous with respect to the weak topologies and linear. 

\ms The main result, which is the analogous and in some extent summarizes Theorems 4.1 and 4.2 in~\cite{Haggstrom1995b}, is the following.

\begin{theorem}\label{theo:Correspondence}
Let $X$ be a two-dimensional SFT and $\Phi$ a one-letter interaction. For each $N\in\IN$ let $\beta_N=\log(N)/\epsilon_0$. The transformation $\mu\mapsto\pi_c^*(\mu):=\mu\circ\pi_c^{-1}$ is an homeomorphism between $\Max(X_N)$ and $\E_\Phi(\beta_N)$ with respect to the respective weak topologies and it is such that $\pi_c^*(\lambda\mu+(1-\lambda)\nu)=\lambda\,\pi_c^*\mu+(1-\lambda)\,\pi_c^*\nu$ for each $\mu,\nu\in \Max(X_N)$ and every $\lambda\in[0,1]$.
\end{theorem}

\ms Notice that $\pi^*_c$ establishes a one-to-one correspondence between ergodic measures in $\Max(X_N)$ and $\E_{\beta_N}(\Phi)$. It is this fact that allows the construction of strongly irreducible SFTs which are not intrinsically ergodic.

\ms We also have the following.

\begin{proposition}\label{prop:TopEntropy}
Let $f(\beta)$ be the Helmholtz free energy at inverse temperature $\beta\geq 0$ and for each $N\in\IN$, let $\beta_N=\log(N)/\epsilon_0$. Then, $h_{\rm top}(X_N)=\log(N)\,\max S-\beta_N\,f(\beta_N)$.
\end{proposition}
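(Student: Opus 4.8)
The plan is to compute $h_\top(X_N)$ directly from its definition, by counting the admissible patches of $X_N$ on the boxes $\Lambda_n$ and recognising the outcome as a partition function of $\Phi$ at inverse temperature $\beta_N$. Since the rule defining $X_N$ constrains only the color coordinate ($\L_N=\{\a\in\A_N^{\Lambda_{\cross}}:\pi_c(\a)\in\L\}$) and $X$, hence $X_N$, is a strongly irreducible SFT, a patch $\a\in\A_N^{\Lambda_n}$ is admissible for $X_N$ if and only if $a:=\pi_c(\a)\in\L_{\Lambda_n}(X)$ and, at each $z\in\Lambda_n$, the tone $\pi_t(\a_z)$ is an arbitrary element of $\IZ_{N_{s(z)}}$, where $s(z)\in S$ is the energy level of $a_z$, i.e., $\Phi_{\{z\}}(a_z)=\epsilon_0\,s(z)$. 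Summing over tones and then over colors, using $N_s=N^{\max S-s}$ and the single-site form of the interaction (so that $\sum_{z\in\Lambda_n}s(z)=\tfrac1{\epsilon_0}\sum_{z\in\Lambda_n}\Phi_{\{z\}}(a_z)=:\tfrac1{\epsilon_0}H_{\Lambda_n}(a)$), one gets
\[
|\L_{\Lambda_n}(X_N)|
=\sum_{a\in\L_{\Lambda_n}(X)}\prod_{z\in\Lambda_n}N_{s(z)}
=N^{|\Lambda_n|\max S}\sum_{a\in\L_{\Lambda_n}(X)}e^{-\beta_N H_{\Lambda_n}(a)},
\]
because $N^{-\sum_{z}s(z)}=\exp\!\big(-\tfrac{\log N}{\epsilon_0}H_{\Lambda_n}(a)\big)=e^{-\beta_N H_{\Lambda_n}(a)}$; the last sum is a free-boundary partition function of $\Phi$ on $\Lambda_n$, which I denote $\widehat{\Z}_{\beta_N}(\Lambda_n)$.

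Next I would take logarithms, divide by $|\Lambda_n|$, and let $n\to\infty$, obtaining
\[
h_\top(X_N)=(\log N)\max S+\lim_{n\to\infty}\frac1{|\Lambda_n|}\log\widehat{\Z}_{\beta_N}(\Lambda_n).
\]
Granting that the free-boundary pressure equals the fixed-boundary one entering the Helmholtz free energy, that is $\lim_n\tfrac1{|\Lambda_n|}\log\widehat{\Z}_{\beta_N}(\Lambda_n)=\lim_n\tfrac1{|\Lambda_n|}\log\Z_{\beta_N}(\Lambda_n,x)=-\beta_N f(\beta_N)$, and using $\log N=\epsilon_0\beta_N$, the identity above becomes
\[
h_\top(X_N)=\epsilon_0\beta_N\max S-\beta_N f(\beta_N)=\beta_N\big(\epsilon_0\max S-f(\beta_N)\big),
\]
which is the assertion (for $N\ge 2$; for $N=1$ one has $\beta_1=0$, and the statement reduces, via $\lim_{\beta\to0}\beta f(\beta)=-h_\top(X)$, to $h_\top(X_1)=h_\top(X)$).

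The one point demanding genuine work is the equality of the free- and fixed-boundary pressures just used. This is the standard fact — already implicit in the definition of $f$ recalled above, which records no dependence on the boundary point $x$ — that the free energy of a strongly irreducible SFT does not depend on the boundary condition, and I would prove it by the usual sandwiching. If $\ell$ denotes the strong-irreducibility constant, then on one side every $a\in\L_{\Lambda_n}(X)$ restricts to an element of $\L_{\Lambda_{n-\ell}}(X)$ that strong irreducibility permits to complete by a prescribed admissible boundary, while on the other side each configuration counted by $\Z_{\beta_N}(\Lambda_n,x)$ contributes a term to $\widehat{\Z}_{\beta_N}(\Lambda_{n+\ell})$. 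Since the single-site energy is bounded by $\epsilon_0\max S$ in absolute value, all the discrepancies — the change of boundary condition, the missing or extra boundary-layer energy terms, and the difference between $H_{\Lambda_n}$ summed over $\Lambda_n$ and over $\Lambda_n\cup\partial^r\Lambda_n$ — are confined to layers of width $O(\ell)$ and amount to a multiplicative factor $\exp(O(|\partial\Lambda_n|))=\exp(o(|\Lambda_n|))$, which disappears after dividing by $|\Lambda_n|$ and letting $n\to\infty$. Carrying out these comparisons carefully — tracking which box is glued to which, and checking that no gluing alters a count by more than a sub-exponential factor — is the only place where care is needed; the rest is the bookkeeping displayed above.
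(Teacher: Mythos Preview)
Your proof is correct and follows essentially the same route as the paper's: both count $|\L_{\Lambda_n}(X_N)|$ by fibering over colors, recognise the result as $N^{|\Lambda_n|\max S}$ times a partition function of $\Phi$ at $\beta_N$, and then absorb a boundary discrepancy of order $e^{O(|\partial\Lambda_n|)}$ in the limit. The only organisational difference is that the paper reuses the fixed-boundary identity $|\Omega_\Lambda(\x)|=e^{(\log N)\max S\,|\Lambda|}\Z_{\beta_N}(\Lambda,x)$ already established in Lemma~\ref{lem:Correspondence} and sandwiches the free count $|\L_{\Lambda_n}(X_N)|$ between $|\Omega_{\Lambda_n}(\x)|$ and $\sum_\y|\Omega_{\Lambda_n}(\y)|$, whereas you compute the free count directly and then invoke independence of the pressure from boundary conditions; the substance of the two sandwiching arguments is the same.
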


\subsection{}
The proof of Theorem~\ref{theo:Correspondence} follows from some classic results in Statistical Mechanics and direct computations. All the ideas behind the proof are already present in the works by Burton and Steif~\cite{Burton&Steif1994,Burton&Steif1995} and the subsequent works by H\"aggstr\"om~\cite{Haggstrom1995a, Haggstrom1995b}. Nevertheless, in the version presented here, the proof greatly simplifies, underlining the key ideas. 

\ms We will require some additional notation. For each $\Lambda\Subset\IZ^2$, $a\in\A^{\Lambda}$ and $\x\in\A_N^{\partial\Lambda}$, let
\begin{align*}
\Omega_\Lambda(\x)
&:=\left\{\b\in \A_N^\Lambda:\,[\b\oplus\x]\neq\emptyset\right\},\\ 
\Omega_\Lambda(a,\x)
&:=\left\{\a\in\pi_c^{-1}(a):\, [\a\oplus\x]\neq\emptyset\right\}.
\end{align*}
We have the following.

\begin{lemma}\label{lem:Correspondence}
For $N\in\IN$ let $\beta_N=\log(N)/\epsilon_0$, and let $\mu_N\in\Max(X_N)$. For each $\Lambda\Subset\IZ^2$, $\a\in\A_N^{\Lambda}$ and $\x\in \A_N^{\partial\Lambda}$, we have 
\begin{equation}\label{eq:Correspondence}
\mu_N([\a] |\x) = 
\frac{\mu_{\beta_N}([a]|x)}{|\Omega_\Lambda(a,\x)|}=
\frac{e^{\beta_N\,H_\Lambda(a\oplus x)}
    }{e^{\beta_N\,(\epsilon_0\max S)\,|\Lambda|}},
\end{equation}
where $a=\pi_c(\a)$ and $x=\pi_c(\x)$.
\end{lemma}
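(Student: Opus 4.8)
\ms The plan is to reduce the identity \eqref{eq:Correspondence} to an elementary count of tone refinements. The starting point is the characterisation of maximal entropy measures recalled above: the elements of $\Max(X_N)$ are the equilibrium states at $\beta=0$ of any finite-range interaction with continuous local energy on $X_N$, hence they are the uniform Gibbs measures of the nearest-neighbour SFT $X_N$. Since the admissibility of a patch on $\Lambda$ relative to the exterior depends only on the sites of $\partial\Lambda$, the limiting conditional measure is well defined given $\x\in\A_N^{\partial\Lambda}$, and
\[
\mu_N([\a]\,|\,\x)=\frac{1}{|\Omega_\Lambda(\x)|}
\]
for every $\a\in\A_N^{\Lambda}$ with $[\a\oplus\x]\neq\emptyset$ --- in particular the left-hand side does not depend on $\a$ --- while $\mu_N(\pi_c^{-1}[a]\,|\,\x)=|\Omega_\Lambda(a,\x)|/|\Omega_\Lambda(\x)|$. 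So everything is reduced to evaluating these two cardinalities.

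\ms Next I would record the transparency of $X_N$ with respect to the tones. By definition $\L_N=\{\a\in\A_N^{\Lambda_{\cross}}:\pi_c(\a)\in\L\}$, so $X_N=\pi_c^{-1}(X)$ and $[\a\oplus\x]\neq\emptyset$ in $X_N$ if and only if $[a\oplus x]\neq\emptyset$ in $X$, where $a=\pi_c(\a)$ and $x=\pi_c(\x)$; moreover, once the colours of a patch on $\Lambda$ are fixed, the tones may be chosen freely and independently at every site of $\Lambda$. Writing $s(c)$ for the unique $s\in S$ with $c\in\A_s$ (so the site carrying the colour $c$ admits $N_{s(c)}$ tones), this gives $|\Omega_\Lambda(a,\x)|=\prod_{z\in\Lambda}N_{s(a_z)}$ when $[a\oplus x]\neq\emptyset$ and $|\Omega_\Lambda(a,\x)|=0$ otherwise, while $\Omega_\Lambda(\x)=\bigsqcup_{a}\Omega_\Lambda(a,\x)$ is a disjoint union over the $X$-admissible colour patches $a$ with $[a\oplus x]\neq\emptyset$, so $|\Omega_\Lambda(\x)|=\sum_a|\Omega_\Lambda(a,\x)|$.

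\ms The crux is then the observation that the choices $N_s=N^{\max S-s}$ and $\beta_N=\log N/\epsilon_0$ built into the construction make the number of tone refinements of a colour patch equal to its Boltzmann weight up to a $\Lambda$-dependent constant:
\[
\prod_{z\in\Lambda}N_{s(a_z)}=N^{(\max S)|\Lambda|-\sum_{z\in\Lambda}s(a_z)}=e^{\beta_N(\epsilon_0\max S)|\Lambda|}\,e^{-\beta_N H_\Lambda(a\oplus x)},
\]
where $H_\Lambda(a\oplus x)=\sum_{z\in\Lambda}\Phi_{\{z\}}(a_z)=\epsilon_0\sum_{z\in\Lambda}s(a_z)$ is the energy of the patch $a$ (any boundary contribution being a constant common to all patches compatible with $\x$, which cancels in the ratios below). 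Summing over $a$ identifies $|\Omega_\Lambda(\x)|$ with $e^{\beta_N(\epsilon_0\max S)|\Lambda|}\,\Z_{\beta_N}(\Lambda,x)$; hence $\mu_N(\pi_c^{-1}[a]\,|\,\x)=|\Omega_\Lambda(a,\x)|/|\Omega_\Lambda(\x)|=\mu_{\beta_N}([a]\,|\,x)$, which gives the first equality in \eqref{eq:Correspondence}, and feeding the two cardinality formulas into $1/|\Omega_\Lambda(\x)|$ produces the explicit exponential expression on its right.

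\ms I do not expect a genuine obstacle: the statement is an exact bookkeeping identity designed into the construction. The two points deserving attention are (i) invoking the correct description of $\Max(X_N)$ --- as the uniform Gibbs measures of the nearest-neighbour SFT $X_N$, which is where strong irreducibility of $X_N$ (inherited from that of $X$) is used, ensuring both existence and the validity of the conditional description --- and (ii) verifying that the defining patches $\L_N$ impose no constraint coupling tones at distinct sites, so that the product formula for $|\Omega_\Lambda(a,\x)|$ is exact; both are immediate from the definition of $\L_N$.
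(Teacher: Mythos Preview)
Your proposal is correct and follows essentially the same route as the paper's proof: uniform conditional distribution for measures of maximal entropy, the product formula $|\Omega_\Lambda(a,\x)|=\prod_{z\in\Lambda}N_{s(a_z)}$ (the paper writes this via the level sets $\gamma_\Lambda^s(a)=\{z\in\Lambda:a_z\in\A_s\}$, which is merely a notational variant), the identity $\prod_z N_{s(a_z)}=e^{\beta_N(\epsilon_0\max S)|\Lambda|}e^{-\beta_N H_\Lambda(a\oplus x)}$, and the ratio yielding $\mu_{\beta_N}([a]\,|\,x)$. The two points you flag for attention --- the characterisation of $\Max(X_N)$ as uniform Gibbs measures and the absence of any tone coupling in $\L_N$ --- are exactly the ingredients the paper uses, and nothing further is needed.
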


\begin{proof}
Let us start by noticing that $[\a\oplus\x]\neq\emptyset$ if and only if $[a\oplus x]\neq \emptyset$. Let us assume that $[\a\oplus\x]\neq\emptyset$, otherwise the equalities~\eqref{eq:Correspondence} trivially holds. Since $\Max(X_N)$ coincides with $\E_0(\Psi)$ for an arbitrary continuous interaction $\Psi$, the volume-$\Lambda$ conditional measure $\mu_N(\bullet\,|\x)$ of any $\mu_N\in\Max(X_N)$ is necessarily  
uniformly distributed on the set $\Omega_\Lambda(\x)$, i. e.,
\begin{equation}
\mu_N([\a]|\x):=\frac{1}{|\Omega_\Lambda(\x)|}.
\end{equation}

\ms We associate to each $a\in \A^{\Lambda}$ and $s\in S$ the level-set 
\[
\gamma_{\Lambda}^s(a):=\{z\in\Lambda:\ a_z\in\A_s\}.
\]
All the patches in $\Omega_\Lambda(a,\x)$ have the color type of $a$, while the tone at each site can take any of the values compatible with the corresponding color type, therefore
\begin{equation}\label{eq:OmegaAX}
\Omega_{\Lambda}(a,\x) = \prod_{s\in S}\prod_{z\in\gamma_\Lambda^s(a)}
(\{a_z\}\times\{0,1,\ldots,N_s-1\}).
\end{equation}
Let us remind that $N_s=N^{\max S-s}$. Taking this into account, by using~\eqref{eq:OmegaAX} and the equipartition property of $\mu_N$, we obtain 
\begin{eqnarray}
|\Omega_{\Lambda}(a,\x)| 
&=& \prod_{s\in S}\prod_{z\in\gamma_\Lambda^s(a)}\,N_s
   =e^{\sum_{s\in S}|\gamma_\Lambda^s(a)|\,\log(N_s)}
   =N^{|\Lambda|\,\max S}\left(
   e^{-\log(N)\sum_{s\in S}s\,|\gamma_\Lambda^s(a)|}\right),\\
|\Omega_{\Lambda}(\x)|
&=&
\sum_{b\in\A^\Lambda}|\Omega_{\Lambda}(b,\x)|
                    =\sum_{[b\oplus x]\neq\emptyset}
\prod_{s\in S}\prod_{z\in\gamma_\Lambda^s(b)}\,N_s
=N^{|\Lambda|\,\max S}\sum_{\sum_{[b\oplus x]\neq\emptyset}}
e^{-\log(N)\sum_{s\in S}s\,|\gamma_\Lambda^s(b)|}. 
\label{eq:CorrespondencePartition}
\end{eqnarray} 
Hence,
\begin{align*}
\mu_N(\pi_c^{-1}[a]|\x) &\equiv
\frac{|\Omega_\Lambda(a,\x)|}{|\Omega_\Lambda(\x)|}=
\frac{e^{-\log(N)\sum_{s\in S}s\,|\gamma_\Lambda^s(a)|}
    }{\sum_{b\in\A^\Lambda:\ [b\oplus x]\neq \emptyset} 
        e^{-\log(N)\sum_{s\in S}s\,|\gamma_\Lambda^s(b)|} } \\
&=\frac{e^{-\beta_N\,\epsilon_0\sum_{s\in S}s\,|\gamma_\Lambda^s(a)|}
    }{\sum_{b\in\A^\Lambda:\ [b\oplus x]\neq \emptyset} 
        e^{-\beta_N\,\epsilon_0\sum_{s\in S}s\,|\gamma_\Lambda^s(b)|}}
 =\frac{e^{-\beta_N\,H_\Lambda(a\oplus x)}
    }{\sum_{b\in\A^\Lambda} e^{-\beta_N\,H_\Lambda(b\oplus x)}}
=\mu_{\beta_N}([a]| x).
\end{align*}
Notice that $H_\Lambda(b\oplus x)$ depends on $x$ only through the fact that $b\oplus x$ has to be an admissible patch. Finally, since $\mu_N(\bullet|\x)$ is the uniform measure on $\Omega_\Lambda(\x)$, then
\[
\mu_N([\a]|\x)=\frac{\mu_{\beta_N}([a]|x)}{|\Omega_\Lambda(a,\x)|}
=\mu_{\beta_N}([a]|x)
\frac{e^{\beta_N\,H_\Lambda(a\oplus x)}
     }{e^{\beta_N\,(\epsilon_0\max S)\,|\Lambda|}}.
\]
\end{proof}

\subsection{}{\bf Proof of Theorem~\ref{theo:Correspondence}}.
 
\begin{proof} Let us start by noticing that $\Max(X_N)$ coincides with the simplex of equilibrium states on $X_N$ for energy everywhere zero. A direct computation allows to verify that $\pi_c^*$ is linear in the space of signed measures and therefore it is such that
\[\pi_c^*(\lambda\mu+(1-\lambda)\nu)
=\lambda\,\pi_c^*\mu+(1-\lambda)\,\pi_c^*\nu,\]
for each $\mu,\nu\in \Max(X_M)$ and every $\lambda\in[0,1]$. 
It is as well easy to verify that $\pi_c^*$ is continuous with respect to the weak topologies. Let us take $\mu_N\in\Max(X_N)$ and let $\nu=\pi_c^*(\mu_N)$. Since $\mu_N$ is a Markov field, then for each $\Lambda\Subset\IZ^2$, $a\in\IZ_q^\Lambda$ and $x\in \IZ_q^{\partial^\Lambda}$ we have
\[
\nu([a]|x):=\frac{\nu([a\oplus x])}{\nu([x])}
 =\frac{\mu_N([\pi_c^{-1}(a\oplus x)])}{\mu_N([\pi_c^{-1}(x)])}
 =\frac{\sum_{\x\in\pi_c^{-1}(x)}\mu_N[\x]\mu_N(\pi_c^{-1}[a]|\x)
                 }{\sum_{\x\in\pi_c^{-1}(x)}\mu_N[\x]}.
\]
According to Lemma~\ref{lem:Correspondence}, 
\[
\mu_N(\pi^{-1}_c[a]|\x)=\sum_{\a\in\pi^{-1}(a)}\mu_N([\a]|\x)
=\mu_{\beta_N}([a]|x),
\]
therefore 
\[
\nu([a]|x)=\mu_{\beta_N}([a]|x)\,
\frac{\sum_{\x\in\pi_c^{-1}(x)}\mu_N[\x]
    }{\sum_{\x\in\pi_c^{-1}(x)}\mu_N[\x]} =\mu_{\beta_N}([a]|x),
\]
which proves that $\pi^*_c(\mu_N)\in\E(\beta_N)$. 

\ms Now, for $\mu_{\beta_N}\in\E(\beta_N)$, let $\nu\in\M(X_N)$ be 
such that
\begin{equation}\label{eq:InversePic}
\nu([\a]):=\frac{\mu_{\beta_N}([\pi_c(\a)])}{|\pi_c^{-1}\{\pi_c(\a)\}|},
\end{equation}
for each $\Lambda\Subset\IZ^2$ and every $\a\in\A_N^{\Lambda}$. Clearly $\nu$ is a probability measure and since $\mu_{\beta_N}$ is $\sigma$-invariant, then for each $z\in\IZ^2$ we have 
\[
\nu(\sigma_z[\a])
=\frac{\mu_{\beta_N}(\sigma_z[\pi_c(\a)])}{|\pi_c^{-1}\{\pi_c(\a')\}|}
=\frac{\mu_{\beta_N}([\pi_c(\a)])}{|\pi_c^{-1}\{\pi_c(\a)\}|}
=\nu(\sigma_z[\a]),
\]
where  $\a'\in\A_N^{\Lambda-z}$ is such that $\a_s=\a'_{s+z}$ for each $s\in\Lambda$. Therefore $\nu$ is $\sigma$-invariant as well. On the other hand, for each $\x\in \mathcal{A}^{\partial \Lambda}_N$, and taking into account Lemma~\ref{lem:Correspondence}, we have
\begin{align*}
\nu([\a]|\x)
&:=\frac{\nu([\a\oplus\x])}{\nu([\x])}
=\frac{\mu_{\beta_N}([\pi_c(\a\oplus\x)])}{\mu_{\beta_N}([\pi_c(\x)])}\,
 \frac{|\pi_c^{-1}\{\pi_c(\x)\}|}{|\pi_c^{-1}\{\pi_c(\a\oplus\x)\}|}\\
&=\frac{\mu_{\beta_N}([\pi_c(\a)]|\pi_c(\x)])
       }{|\Omega_\Lambda(a,\x)|}\,
\frac{|\Omega_\Lambda(a,\x)|\,|\pi_c^{-1}\{\pi_c(\x)\}|}
     {|\pi_c^{-1}\{\pi_c(\a\oplus\x)\}|}                 \\ 
&=\mu_N([\a]|\x)\,
   \frac{|\Omega_\Lambda(a,\x)|\,|\pi_c^{-1}\{\pi_c(\x)\}|
       }{|\pi_c^{-1}\{\pi_c(\a\oplus\x)\}|},                
\end{align*}
where $\mu_N([\a]|\x)$ is the common value of the conditional probability for any measure $\mu_N\in\Max(X_N)$. One can easily verify that
\[
|\Omega_\Lambda(a,\x)|=\frac{|\pi_c^{-1}\{a\oplus x\}|
                           }{|\pi_c^{-1}\{x\}|},
\] 
therefore $\nu([\a]|\x)=\mu_N([\a]|\x)$. Hence, $\nu\in\Max(X_N)$ and since $\nu\circ\pi_c^{-1}([a])=\mu_{\beta_N}([a])$, then the transformation $\mu_{\beta_N}\mapsto\nu$ defined by~\eqref{eq:InversePic} gives the inverse of $\pi_c^*$. The continuity of this transformation with respect to the weak topologies is a direct consequence of its very definition and its verification is straightforward. 

\ms From all the above it follows that $\pi_c^*$ is a bicontinuous linear bijection between the simplices $\Max(X_N)$ and $\E(\beta_N)$. 

\end{proof}
 
\ms {\bf Proof of Proposition~\ref{prop:TopEntropy}}

\begin{proof}
According to~\eqref{eq:CorrespondencePartition}, for each $\Lambda\Subset\IZ^2$ and $\x\in\A_N^{\partial\Lambda}$ we have
\[
|\Omega_{\Lambda}(\x)|=
\sum_{b\in\A^\Lambda}|\Omega_{\Lambda}(b,\x)|
=e^{(\log(N)\,\max S) |\Lambda|}\sum_{\sum_{[b\oplus x]\neq\emptyset}}
e^{-\beta_N\,H_\Lambda(b\oplus x)}\equiv 
 e^{(\log(N)\,\max S) |\Lambda|}\Z_{\beta_N}(\Lambda,\x),
 \]
where $x=\pi_c(\x)\in\A^{\partial\Lambda}$ and $\Z_{\beta_N}(\Lambda,\x)$ is the partition function. Since 
\[
\left|\Omega_{\Lambda}(\x)\right|\leq 
\left|\{\b\in \A_N^{\Lambda}:\, [\b]\neq \emptyset\}\right| 
\leq \sum_{\y\in\A_N^{\partial\Lambda}}
\left|\Omega_{\Lambda}(\y)\right|
\leq e^{(\log|\A|+\log(N)\,\max S)|\partial\Lambda|}\,
|\Omega_{\Lambda}(\x)|,
\] 
then
\begin{eqnarray}\label{eq:Helmholtz}
\frac{\log\left(
   \left|\{\b\in \A_N^{\Lambda}:\, [\b]\neq \emptyset\}
   \right|\right)}{|\Lambda|}
&\geq &
\beta_N\,\epsilon_0\,\max S + 
\frac{\log\Z_{\beta_N}(\Lambda,\x)}{|\Lambda|}\nonumber\\
&\leq &
\beta_N\,\epsilon_0\,\max S +
\frac{\log\Z_{\beta_N}(\Lambda,\x)}{|\Lambda|} + \beta_N\,\epsilon_0\max S
\frac{|\partial\Lambda|}{|\Lambda|}.
\end{eqnarray}
The Helmholtz free energy is at inverse temperature $\beta_N$ the limit
\[f(\beta_N)=-\frac{1}{\beta_N}\lim_{n\to\infty}
\frac{\log\Z_{\beta_N}(\Lambda_n,\x)}{|\Lambda_n|}.\] 
Hence, by inequalities~\eqref{eq:Helmholtz} we finally obtain
\begin{align*}
h_\top(X_N)&:=\lim_{n\to\infty}
\frac{\log\left(
   \left|\{\b\in \A_N^{\Lambda_n}:\, [\b]\neq \emptyset\}
   \right|\right)}{|\Lambda_n|}\\
           &=\beta_N(\epsilon_0\max S-f(\beta_N))=\log(N)\max S-\beta_N\,f(\beta_N).
\end{align*}
\end{proof}

\ms Theorem~\ref{theo:Correspondence} allows us to exhibit transitive SFTs having simplices of maximizing measures with a particular structure, for instance, equal to the standard $(q-1)$-simplex in dimension $q$, as we do in the next section. Furthermore, if for an SMM of the kind considered here we are able to compute the Helmholtz free energy, then Proposition~\ref{prop:TopEntropy} gives us a family of SFTs for which the exact value of the topological entropy can be explicitly given. In the next section, we illustrate these applications using the Potts model.

\section{The SFT-SMM correspondence for the Potts model}~\label{sec:Potts}

\subsection{} The two-dimensional version of the Potts model was introduced by Potts in~\cite{Potts1952}, generalizing a method to find the critical temperature introduced some years before, in the context of the two-dimensinoal Ising model, by Kramers and Wannier~\cite{KramersWannier1941a,KramersWannier1941b}. In this model, the underlying subshift is $(X:=\{0,1,\ldots,q-1\}^{\IZ^2},\sigma)$, the integer $q$ being the number of colors. The interaction $\Phi$ is such that 
\[
\Phi_{\Lambda}(a)=\left\{\begin{array}{ll}
\delta(a_z,a_{z'}) & 
 \text{ if } \Lambda=\{z,z'\}\text{ and } |z-z'|=1,\\
 0                    & \text{ otherwise,}\end{array}\right.
\] 
with $\delta(\cdot,\cdot)$ the Kronecker's delta. We can identify the Potts model with a strongly irreducible SFT $X$ on the alphabet $\A=\{0,1,\ldots,q-1\}^{{\Lambda_{\cross}}}$. The set of admissible patches $\L\subset\A^{{\Lambda_{\cross}}}$ defining $X$ is determined by the overlapping of symbols in $\A$ when considered as patches in $\{0,1,\ldots,q-1\}^{{\Lambda_{\cross}}}$. After this identification, the energy of volume-$\Lambda$ is determined by the one-letter interaction 
\[
a\mapsto \Phi_{\{\0\}}(a)
=\frac{1}{2}|\{z\in\Lambda:\, |z|=1\,\text{and }\,a_z=a_{\0}\}|.
\] 
This interaction satisfies Hypothesis~\ref{hyp:H} with $S=\{0,1,2,3,4\}$ and $\epsilon_0=1/2$.

\subsection{} The phase diagram of the Potts model, i.e., the complete description of the simplex of equilibrium states $\E(\beta)$ for all $\beta \in \mathbb{R}^+$, was described by Martirosian in~\cite{Martirosian1986}. There he proves that for $q$ sufficiently large (according to Baxter~\cite{Baxter1973} large $q$ would mean $q > 4$, which was recently proved by Duminil-Copin and coauthors~\cite{Duminil-Copin&al2021}) there exists a critical inverse temperature $\beta_c$ such that $\E(\beta)$ has exactly $q+1$ extremal measures for $\beta=\beta_c$, it has exactly $q$ extremal measures for $\beta>\beta_c$, and it is a singleton if $\beta<\beta_c$. For the sake of completeness, let us state a version of Martirosian's theorem, with the improvements by Duminil-Copin \& al.~\cite{Duminil-Copin&al2019,Duminil-Copin&al2021}, adapted to our needs.   

\ms
\begin{theorem}~\label{theo:Martirosian} Let $\beta_c(q):=\log(\sqrt{q}+1)/2$ for each $q\geq 2$.
\begin{itemize}
\item[] Let $q\in\{2,3,4\}$.  For $\beta \in [0, \beta_c(q)]$ there is a unique equilibrium state, while for $\beta > \beta_c(q)$ there are exactly $q$ different ergodic equilibrium states. 
\item[] Let $q\geq 5$.  For $\beta \in [0, \beta_c(q))$ there is a unique equilibrium state, for $\beta=\beta_c(q)$ there are exactly $q+1$ different ergodic equilibrium states, while for $\beta > \beta_c(q)$ there are exactly $q$ different ergodic equilibrium states.
\end{itemize}
\end{theorem}

\ms This result derives from the aforementioned works by Martirosian and Duminil-Copin and coauthors. The recent results by Duminil-Copin and coauthors improved Martirosian's theorem, among other things,  in that they rigorously established the critical number of colors, $q=5$,  from which the phase transition is discontinuous. Furthermore, they prove that transition at $\beta_c(q)$ is sharp, which means that the influence of the boundary conditions decays exponentially fast below the critical inverse temperature. Above the critical inverse temperature, to each ergodic equilibrium state it corresponds a unique color $k\in\{0,1,\ldots,q-1\}$ which, with probability one, fill an unbounded connected component of $\IZ^2$ while the rest of the colors occupy only bounded connected components or islands.
 
\ms The critical inverse temperature, $\beta_c(q):=\log(\sqrt{q}+1)/2$, was obtained from a duality argument first introduced by Kramers and Wannier. It was proved to be exact for $q=2$ from Onsager's result~\cite{Onsager1944}. For $q\geq 4$, the conjectured value was proved to be true by using the Suzuki-Fisher circular theorem~\cite{Suzuki&Fisher1971}, via a  computation that can be found in~\cite{HintermannKunzWu1974}. The gap, $q=3$, was only recently filled by Beffara and Duminil-Copin in~\cite{Beffara&Duminil-Copin2012}.

\subsection{} The free energy of the two-dimensional Ising model was explicitly computed by Onsager~\cite{Onsager1944}. The Potts model for $q=2$ is equivalent to the Ising model, the only difference if that $\Phi^{\rm Potts}=(\Phi^{\rm Ising}+1)/2$. Taking this into account, and using the exact result by Onsager, we obtain
\begin{equation}~\label{eq:Onsager}
-\beta\,f(\beta)=-\beta+\frac{\log(2)}{2}+ \frac{1}{2\pi}\int_0^\pi\log\left(\cosh^2(\beta)+
\frac{\sqrt{1+\kappa^2-2\kappa\,\cos(2\phi)}}{\kappa}\right)\,d\phi,
\end{equation}
where $\kappa=1/\sinh^2(\beta)$.

\subsection{} As mentioned above, the Potts model satisfies Hypothesis~\ref{hyp:H}. It can be easily verified that the SFT resulting after a coding reducing the interaction to one-letter, is strongly irreducible. This is of course a direct consequence of the fact that the original Potts model is defined on a full shift. Hence, we can construct the corresponding family of SFTs defined in the previous section.  Theorems~\ref{theo:Correspondence} and~\ref{theo:Martirosian} ensure the following.

\begin{corollary}\label{cor:StandardSimplex}
For each $q\geq 2$ and $N > \ell_q:=\sqrt[4]{\sqrt{q}+1}$, there exists a strongly irreducible two-dimensional SFT on an alphabet of size $q\times(1+(q-1)N)^4$, defined by a collection of admissible patches on $\Lambda_{\cross}$, having exactly $q$ different measures of maximal entropy, i.e., such that $\Max(X_N)$ is equivalent to the standard $(q-1)$-simplex of probability vectors in dimension $q$. 
\end{corollary}

\begin{proof}
For $q\geq 2$ given, let $X$ be the two-dimensional SFT on the alphabet $\{0,1,\ldots, q-1\}^{\Lambda_{\cross}}$ codifying $\{0,1,\ldots,q-1\}^{\IZ^2}$ by overlapping patches, and let $\Phi$ be the one-letter interaction induced by the Potts interaction by this codification. Fix $N$ and consider the strongly irreducible SFT on the alphabet $\A_N=\bigcup_{s=0}^4(\A_s\times\{0,1,\ldots,N_s-1\})$, as prescribed in Section~\ref{sec:BF-correspondence} when applied to the Potts model, for which $S=\{0,1,2,3,4\}$ and $\epsilon_0=1/2$. As stated in Theorem~\ref{theo:Martirosian}, the critical inverse temperature of the Potts model is $\beta_c(q)=\log(\sqrt{q}+1)/2$. According to Theorem~\ref{theo:Correspondence}, to each splitting multiplicity $N$ it corresponds an inverse temperature $\beta_N:=\log(N)/\epsilon_0$, hence, 
\[\beta_N > \beta_c(q) \text{ if and only if }  N > 
\ell_q:=\sqrt[4]{\log(\sqrt{q}+1)}. \]
On the other hand, Theorem~\ref{theo:Martirosian} ensures that above $\beta_c(q)$, the simplex of equilibrium states, $\E_\Phi(\beta_N)$, is equivalent to the standard $(q-1)$-simplex of probability vectors in dimension $q$. Therefore, from Theorem~\ref{theo:Correspondence} we obtain that for each $N > \ell_q$, $\Max(X_N)$ has exactly $q$ different ergodic measures. Finally, is can be easily verified that 
\[|\A_N|=\sum_{s=0}^4N^{4-s}|\A_s|=\sum_{s=0}^4 N^{4-s}\,
q\left(\begin{matrix}4 \\ s\end{matrix}\right)q(q-1)^{4-s}=q\times(1+(q-1)N)^4
\]
\end{proof}

\ms For the SFT $X_N$ corresponding to the $q$-colored Potts model at inverse temperature $\beta_N$ to admits $q+1$ different ergodic measures of maximal entropy, we need $N\in \{\ell_q:\ q\geq 5\}\cap\IN$. For those integers, $\Max(X_N)$ is spanned by $q$ ergodic measures, each one giving preference to each one of the colors $k=0,1,\ldots,q-1$,  and an extra ergodic measure for which all colors appear with the same proportion. The smallest of such systems is obtained with $q=225$ and $N=2$.

\subsection{} For the family of strongly irreducible SFTs corresponding to the Ising model (Potts with $q=2$) we have $\ell_2=\sqrt[4]{\sqrt{2}+1}\approx 1.246504703$. Hence, while $X_1$ is intrinsically ergodic, $\Max(X_N)$ has two extrema for each $N\geq 2$. In this way, the construction developed in Section~\ref{sec:BF-correspondence} gives us a strongly irreducible SFT on an alphabet of size $2\times 3^4$ symbols with exactly two different ergodic measures. In general, the correspondence established by Theorem~\ref{theo:Correspondence} applied to the Potts model, gives us a strongly irreducible two-dimensional SFT on an alphabet of size $q\times(1+(q-1)\lceil \sqrt[4]{\sqrt{q}+1}\,\rceil)^4$, defined by patches on the volume $\Lambda_{\cross}$. This is of course not the smallest cardinality required. A clever coding allows us to find alphabets of smaller cardinality achieving the correspondence. In Appendix~\ref{ape:coding-potts} we explicitly give a strongly irreducible SFT on the alphabet $\{0,1\}\times \{0,1,2\}^2$, admiting exactly two different ergodic measures.    

\ms In the case of $q=2$, which corresponds the Ising model, $X_N$ is defined on an alphabet of size $2\times (N+1)^ 4$, by a collection of admissible patches on $\Lambda_{\cross}$. Therefore, the trivial upper bound for the topological entropy is in this case 
\[h_\top(X_N)\leq 4\log(N)+\log(2),\] 
but since the overlapping condition has to be respected, a finer upper bound can be obtained by taking into account horizontal and vertical adjacency restrictions. Indeed, in an admissible configuration $x$, the symbolos $\pi_c(x_{\0})$ and $\pi_c(x_{\rm e_1})$, when considered as patches in $\{0,1\}^{\Lambda_{\cross}}$, have to share two letters. Therefore, given $\pi_c(x_{\0})$, the patch $\pi_c(x_{{\rm e}_1})$ has only three free positions at sites ${\rm e}_1, {\rm e}_2$ and $-{\rm e}_2$. If we suppose that the overlapping sites of patches $\pi_c(x_{\0})$ and $\pi_c(x_{{\rm e}_1})$ have the same letter, then, according to which of the three free sites of the patch $\pi_c(x_{{\rm e}_1})$ are equal to the central site, we obtain 
\[D_N=N^4+\left(\begin{matrix}3\\1\end{matrix}\right)\,N^3+\left(\begin{matrix}3\\1\end{matrix}\right)\,N^2+\left(\begin{matrix}3\\1\end{matrix}\right)\,N=N(N+1)^3\]
possibilities for $x_{{\rm e}_1}$. If on the contrary, we suppose that the overlapping sites of patches $\pi_c(x_{\0})$ and $\pi_c(x_{{\rm e}_1})$ have different letters, then by a similar counting we obtain $D_N/N$ different possibilites for $x_{{\rm e}_1}$. The same kind of restriction applies for the vertical adjacency. From this we readily obtain the upper bound
\[
h_\top(X_N)\leq \lim_{n\to\infty}\frac{|\A_N|\times D_N^{n-1}\,(D_N^{n-1})^n}{n^2}=3\,\log(N+1).
\]
Taking into account that the Helmholtz free energy of the Potts model is known, then Proposition~\ref{prop:TopEntropy} allows us to exactly compute the topological entropy. We have the following.

\begin{corollary}\label{cor:TopologicalEntropy}
The two-dimensional SFT $X_N$ corresponding, via Theorem~\ref{theo:Correspondence}, to the Potts model with $q=2$ at inverse temperature $\beta_N=2\log(N)$, has topological entropy
\[
h_{\rm top}(X_N)=2\,\log(N)+\frac{\log(2)}{2}+
\frac{1}{2\pi}\int_0^\pi
\log\left(\left(\frac{N^4+1}{2\,N^2}\right)^2+
\frac{\sqrt{1+\kappa_N^2-2\kappa_N\,\cos(2\phi)}}{\kappa_N}
\right)\,d\phi,
\]
where $\kappa_N:=(2N^2/(N^2-1))^2$
\end{corollary}

\ms This result directly follows from Onsager's result and Proposition~\ref{prop:TopEntropy}, by taking into account that the correspondence at inverse temperature $\beta_N$ gives $\kappa=\kappa_N$ in Equation~\ref{eq:Onsager}.

\section{The SFT-SMM correspondence for the Six-vertex models}~\label{sec:SixVertex}
\subsection{}
Besides the Potts model, there are a few SMMs for which the Helmholtz free energy is exactly known. Among them, we have the ice-type models or six-vertex models, introduced to model crystals with hydrogen bonds, such as for instance, ice crystal (see~\cite{BaxterBook1982} and references therein, and~\cite{Duminil-Copin2022} for a very recent account). Those models can be thought as SMMs with support on a two-dimensional subshift of finite type $X$ with entries in the alphabet $\A=\{{\bf ne},{\bf sw},{\bf se},{\bf nw},{\bf oi},{\bf io}\}$. The symbols of $\A$ represent arrow configurations around a central node as indicated in the picture
\begin{center}
\begin{figure}[h]
\begin{tikzpicture}[scale=0.5];
\node at (1,2.5) {\bf ne};
\draw[->](0,1)--(0.9,1);\draw[->](1.1,1)--(2,1);
\draw[->](1,0)--(1,0.9);\draw[->](1,1.1)--(1,2);
\node at (1,1) {$\cdot$};
\node at (4,2.5) {\bf sw};
\draw[<-](3,1)--(3.9,1);\draw[<-](4.1,1)--(5,1);
\draw[<-](4,0)--(4,0.9);\draw[<-](4,1.1)--(4,2);
\node at (4,1) {$\cdot$};
\node at (7,2.5) {\bf se};
\draw[->](6,1)--(6.9,1);\draw[->](7.1,1)--(8,1);
\draw[<-](7,0)--(7,0.9);\draw[<-](7,1.1)--(7,2);
\node at (7,1) {$\cdot$};
\node at (10,2.5) {\bf nw};
\draw[<-](9,1)--(9.9,1);\draw[<-](10.1,1)--(11,1);
\draw[->](10,0)--(10,0.9);\draw[->](10,1.1)--(10,2);
\node at (10,1) {$\cdot$};
\node at (13,2.5) {\bf oi};
\draw[->](12,1)--(12.9,1);\draw[<-](13.1,1)--(14,1);
\draw[<-](13,0)--(13,0.9);\draw[->](13,1.1)--(13,2);
\node at (13,1) {$\cdot$};
\node at (16,2.5) {\bf io};
\draw[<-](15,1)--(15.9,1);\draw[->](16.1,1)--(17,1);
\draw[->](16,0)--(16,0.9);\draw[<-](16,1.1)--(16,2);
\node at (16,1) {$\cdot$};
\end{tikzpicture}
\end{figure}
\end{center}
\ms The collection of admissible patches $\L\subset \A^{\Lambda_{\cross}}$ determining $X$ is built by the following rule: two symbols cannot occupy adjacent sites unless the arrow configurations they represent are such that the head of one arrow matches the tail of the neighboring arrow.  The SFT is supplied with a one-symbol interaction given place to several submodels depending on the relative values of this interaction. All those submodels can fit into the framework of Theorem~\ref{theo:Correspondence} by considering interactions that are an integer multiple of a given value. The first one of the submodels is known as the ``Ice model'', for which the interaction is constant, and therefore it cannot produce a phase transition. The SFT obtained by means of the SFT-SMM correspondence is in this case the original SFT whose topological entropy was computed by E. Lieb in~\cite{Lieb1967}, giving
\[
h_\top(X)=\frac{3}{2}\,\log\left(\frac{4}{3}\right).
\]
This SFT is transitive but not strongly irreducible, and although it is not formally proved, it is expected that the system is not intrinsically ergodic.  There are several results suggesting the coexistence of several measures of maximal entropy, in particular, those concerning the effect of the boundary conditions on the convergence of several indicators (see~\cite{KorepinZinn-Justin2000} for instance). 

\subsection{} Two other important submodels are the KDP model and the Rys F model, which are described in detail in~\cite{BaxterBook1982} (see~\cite{Duminil-Copin2022} as well). They are defined on $X$ by a one-letter interaction of the kind
\begin{equation}\label{eq:InteractionIce}
\Phi_{\0}(a)=\left\{\begin{array}{ll} 
                 \epsilon_0 & \text{ if } a \in\{{\bf ne},{\bf sw}\}, \\
                 \epsilon_1 & \text{ if } a \in\{{\bf se},{\bf nw}\}, \\
                 \epsilon_2 & \text{ if } a \in\{{\bf oi},{\bf io}\},                 
                       \end{array}
                \right.
\end{equation}
with $\{\epsilon_0,\epsilon_1,\epsilon_2\}=\{0,1\}$. They, therefore, satisfy hypothesis H with $S=\{0,1\}$. The KDP model corresponds to the choice $\epsilon_0=0 <\epsilon_1=\epsilon_2=1$, and its behavior with respect to $\beta$ displays two distinct regimes, high temperature when $\beta < \log(2)$ and low temperature where $\beta\geq \log(2)$. At low temperature, the system admits two different ergodic equilibrium states supported by the two homogeneous configurations {\bf ne}$^{\IZ^2}$ and {\bf sw}$^{\IZ^2}$. The Helmholtz free energy of the KDP model is given by
\[
-\beta\, f(\beta)=\left\{
\begin{array}{ll}{\displaystyle 
-\beta+\int_{\IR}\frac{1}{2x}\frac{\sinh(6\,\zeta\,x)}{\cosh(\zeta\,x)}\frac{\sinh((\pi-\zeta)\,x)}{\sinh(\pi\,x)}\,dx } & \text{ if } \beta < \log(2) \\
-\beta  &\text{ if } \beta \geq \log(2),
\end{array}\right.
\]
with $\cos(\zeta)=-e^{\beta}/2$. The SFT-SMM correspondence gives, for each $N\in \IN$, a transitive two-dimensional SFT $X_N$, on the alphabet $\A_N=(\{{\bf ne}, {\bf sw}\}\times \{0,1,\ldots,N-1\})\bigcup\{{\bf se},{\bf nw},{\bf oi},{\bf io}\}$. The topological entropy of each one of those SFTs can be exactly computed. Indeed, applying Proposition~\ref{prop:TopEntropy} to the KDP model at inverse temperature $\beta_N=\log(N)$, we obtain
\[
h_\top(X_N)=\left\{
\begin{array}{ll} 2/3\,\log\left(4/3\right) & \text{ if } N=1, \\
\log(N) &\text{ if } N \geq \log(2).
\end{array}\right.
\]
which is not as interesting as expected. 

\subsection{} With $\epsilon_0=\epsilon_1=1 > \epsilon_2=0$ in Equation~\eqref{eq:InteractionIce}, we obtain the Rys F model. Once again, its behavior with respect to $\beta$ displays two regimes, disordered for $\beta < \log(2)$, and ordered for $\beta\geq \log(2)$. At sufficiently low temperature, the system has a unique ergodic equilibrium state associated to periodic configurations altenating the symbols ${\bf io}$ and ${\bf oi}$. The Helmholtz free energy for this model (which can be found in see~\cite{Duminil-Copin2022}) is 
\[
-\beta\, f(\beta)=\left\{ \begin{array}{ll}
{\displaystyle 
-\beta+\int_{\IR}\frac{1}{2x}\frac{\sinh(2(\pi-\theta_1)\zeta_1x/\pi)}{\cosh(\zeta_1\,x)}\frac{\sinh((\pi-\zeta_1)\,x)}{\sinh(\pi\,x)}\,dx } & \text{ if } \beta < \log(2), \\
{\displaystyle 
-\log(2)+\int_{\IR}\frac{e^{-|x|}}{2x}\frac{\sinh(x)}{\cosh(x)} dx}
& \text{ it } \beta=\log(2), \\
{\displaystyle 
-\beta+\frac{\zeta_3\theta_3}{\pi}+\sum_{n=1}^\infty\frac{e^{-n\zeta_3}}{n}\frac{\sinh(2n\zeta_3\theta_3/\pi)}{\cosh(n\zeta_3)} } & \text{ if } \beta > \log(2). 
\end{array}\right.
\]
with $\cos(\zeta_1)=e^{2\beta}/2-1$, $\sin(\theta_1\zeta_1/\pi)=\sqrt{1-e^{2\beta}/4}$, $\cosh(\zeta_3)=e^{2\beta}/2-1$ and $\sinh(\theta_3\zeta_3/\pi)=\sqrt{e^{2\beta}/4-1}$. The SFT-SMM correspondence gives, for each $N\in \IN$, a transitive two-dimensional SFT $X_N$, on the alphabet $\A_N=(\{{\bf oi}, {\bf io}\}\times \{0,1,\ldots,N-1\})\bigcup\{{\bf ne},{\bf sw},{\bf se},{\bf nw}\}$, for which the topological entropy can be exactly computed. Proposition~\ref{prop:TopEntropy} gives in this case
\[
h_\top(X_N)=\left\{ \begin{array}{ll}
{\displaystyle  \frac{2}{3}\,\log\left(\frac{4}{3}\right) }& \text{ if } N=1,\\
{\displaystyle  2\,\log\left(2\,\frac{\Gamma(5/4)}{\Gamma(3/4)}\right) } & \text{ it }  N=2, \\
{\displaystyle 
\asinh\left(\sqrt{\frac{N^2}{4}-1}\right)+\sum_{n=1}^\infty\frac{e^{-n\,\acosh(N^2/2-1)}}{n}\frac{\sinh(2n\,\asinh(\sqrt{N^2/4-1}))}{\cosh(n\,\acosh(N^2/2-1))} } & \text{ if } N > 2,
\end{array}\right.
\]
which turns out to be a little more interesting.

\section{Final Remarks}
\subsection{} We have seen how statistical mechanics models undergoing a phase transition can be used, by applying the SFT-SMM correspondence, to obtain strongly irreducible subshift of finite type admitting several measures of maximal entropy. We have in particular focused on SMM for which we have a precise description of the phase diagram, the family of Potts models, which allows us to explicitly obtain, for each $q\geq 2$, a two-dimensional SFT having a simplex of measures of maximal entropy equivalent to the standard $(q-1)$-simplex of probability vectors. Furthermore, the detailed description of the pure phases for the Potts model tells us how the ergodic measures of maximal entropy of these SFTs behave. For instance, we know that for each color of the alphabet, there exists a measure of maximal entropy for which the typical configuration contains an unbounded connected region of symbols of that color, surrounding bonded patches of symbols of the complementary colors. The rigorous study of the Potts model (in particular the case $q=2$) gives, through the correspondence, examples of strongly irreducible SFTs for which characteristics such as the speed of decay of correlations, the shape of bounded regions of complementary colors (the Wulff shape), and the distribution of the size of the islands of those colors, are precisely known. Besides the references already cited, a relatively recent account can be found in~\cite{Coquille&al2013} where Martirosian's Theorem is revisited. An even more recent and didactic review concerning the Potts model and related subjects is presented in~\cite{Duminil-Copin2020}. 

\subsection{} The other important application of the SFT-SMM correspondence that we have illustrated concerns the construction of SFTs for which the topological entropy can be explicitly computed. Besides the three families of SFTs we have considered, which correspond to the Ising model and some instances of the six-vertex model, there are other vertex-type SMM for which the Helmholtz free energy is known. They include other instances of the six-vertex model as well as eight-vertex models  (see~\cite{BaxterBook1982} for instance). Those models would provide other families of SFTs for which the topological entropy can be computed. 

\subsection{} There is another possible application of the SFT-SMM correspondence which we have not yet explored, which concerns the construction of examples for which the simplex of measures of maximal entropy can be completely described. We refer to the Pirogov-Sinai Theory (see~\cite{BorgsImbrie1989, Zahradnik1984} for instance), which considers interactions for which the phase diagram at very low temperature is completely determined by the ground states. These are homogeneous or periodic configurations, minimizing the energy, while the energy of an arbitrary configuration is concentrated on contours separating regions of minimal energy. The interactions considered in the Pirogov-Sinai Theory can be chosen to fit our framework, given place, via the SFT-SMM correspondence, to families of SFTs for which the simplex of measures of maximal entropy would be completely determined by a collection of subsystems of the SFT. These subsystems would result from the ground states as we split the alphabet under the SFT-SMM correspondence.

\section*{Acknowledgments}
\ms LAC was supported by CONACyT via the Doctoral Fellowship number 305287. We thank Rafael Alcaraz Barrera for his valuable suggestions.


\appendix

\section{Coding Potts by edges}
\label{ape:coding-potts}
\ms For each color in the Potts model we consider two types of tones, one type vertical and another horizontal (instead of the four types of  tones used by H\"aggstr\"om in~\cite{Haggstrom1995b}). Each type of tone comes in $N$ variants, hence, for each $N$ let $\A_N=\{0,1,\ldots,q-1\}\times\{0,1,\ldots,N-1\}\times\{0,1,\ldots,N-1\}$. The subshifts of finite type $X_N\subset\A_N^{\IZ^2}$ is defined by the set of admissible patches 
\begin{equation}\label{eq:Forbidden}
\L_N:=\left\{\begin{array}{cc}
\begin{array}{c|c} (k,\bullet,\bullet)& \\ \hline 
(k,\bullet,\bullet) & (k,\bullet,\bullet) \end{array}   & 
\begin{array}{c|c} (k,\bullet,\bullet)& \\ \hline 
(k,0,\bullet) & (\ell,\bullet,\bullet) \end{array}   \\
\begin{array}{c|c} (\ell,\bullet,\bullet) & \\ 
\hline (k,\bullet,0) & (k,\bullet,\bullet)\end{array}   & 
\begin{array}{c|c} (\ell,\bullet,\bullet) & \\ 
\hline (k,0,0) & (m,\bullet,\bullet)\end{array}
\end{array}\, :\ k,\ell,m\in\IZ_q,\, \ell\neq k\neq m\right\}.
\end{equation}
The symbol $\bullet$ can be replaced by any element in $\{0,1,\ldots,N-1\}$. The  patches in $\L_N$ are elements of the set $\A_N^{\tt L}$, where ${\tt L}=\{0, {\rm e}^1,{\rm e}^2\}\subset\IZ^2$. By using $\pi_c:\A_N\to\{0,1,\ldots,q-1\}$, the projection on the color coordinate, and $\pi_{\rm h},\pi_{\rm v}:\A_N\to\times\{0,1,\ldots,N-1\}$ the projections on the horizontal and vertical tones respectively, we can define the patches in $\L_N$ as follows. A patch $\a\in \A_N^{{\tt L}}$ is admissible if whenever $\pi_c(\a_0)\neq \pi_c(\a_{{\rm e}^1})$, then necessarily $\pi_{\rm h}(\a_{\0})=0$ and  when $\pi_c(a_{\0})\neq \pi_c(a_{{\rm e}^2})$ then necessarily $\pi_{\rm v}(a_{\0})=0$. It is important to stress the fact that $X_N$ is a strongly irreducible subshift. Indeed, the collection $\{\a\in\A_N:\ \pi_{\rm h}(\a)=\pi_{\rm v}(\a)=0\}$ can be used as a collection of safe symbols, which can be used to fill the space between any two admissible patches. 

\ms A measure $\mu_N\in\M_\sigma^1(X_N)$ of maximal entropy is nothing but an equilibrium state for the constant energy. The correspondence between the original Potts model and the family of subshifts of finite type $X_N$ is established by means of the projection $\pi_c:X_N\to X$. Let us point out that under this correspondence, $\epsilon_0=1$ and therefore $\beta_N=\log(N)$ and not $\log(N)/2$ as it is in Corollary~\ref{cor:StandardSimplex}. This correspondence and Theorem~\ref{theo:Martirosian} allows us to obtain the following.

\begin{proposition}
For each $q\in\IN$, if $N\geq \sqrt{q}+1$, then $\Max(X_N)$ is the standar $(q-1)$-simplex of probability vectors.
\end{proposition}

\ms In particular, for $q=2$ the critical mulitiplicity is $\ell_2=\sqrt{2}+1$ and 
therefore $\Max(X_3)$ has two extrema. In this way we obtain a strongly irreducible SFT on the alphabet $\A_3=\{0,1\}\times\{0,1,2\}\times\{0,1,2\}$ with exactly two ergodic measures.   

\end{document}